\newtheorem{theorem}{Theorem}
\newtheorem{example}{Example}
\newtheorem{corollary}{Corollary}
\newtheorem{conjecture}{Conjecture}
\newtheorem{lemma}{Lemma}
\newcommand{\Z}{{\mathbb Z}}
\newcommand{\N}{{\mathbb N}}
\title{Integers representable as differences of linear recurrence sequences}
\author{Daodao Yang}
\begin{document}
\date{}
\maketitle

\centerline{\bf Abstract}
\medskip
\maketitle

    Let $\{U_n\}_{n \geqslant 0}$ and $\{G_m\}_{m \geqslant 0}$ be two linear recurrence sequences defined over the integers.  We establish  an asymptotic formula  for the number of integers $c$ in the range $[-x, x]$ which can be represented as differences $ U_n - G_m$, when $x$ goes to infinity. In particular, the density of such integers is $0$.
\bigskip

\section{Introduction and main results}
  S. S. Pillai \cite{PillaiConjecture} \cite{Pillaicorrection} studied the following Diophantine equation.  

\begin{equation}\label{Pequation}
    a^m - b^n = c
\end{equation}

    He conjectured that for arbitrary given integer $c \geqslant 1$ the Diophantine equation $(\ref{Pequation})$ has only finitely many positive integer solutions $(a, b, m, n)$ , with $m,n \geqslant 2.$ Pillai's conjecture is a corollary of the \emph{abc} conjecture. For $c = 1$, Pillai's conjecture coincides with  Catalan's conjecture which is already proved by Mihăilescu \cite{Mihailescu}. For all $c \neq 1$, Pillai's conjecture is still open.

For fixed integers $a, b$, Pillai proved that for all sufficiently large $c$, there is at most one solution $(m, n)$ with $m, n \geqslant 2$ to equation $(\ref{Pequation})$.  Pillai  also \cite{Pillai}\cite{PillaiConjecture} \cite{Pillaicorrection} proved that that the number of integers $c$ in the range $[1, x]$ which can be expressed in the form $c = a^m - b^n$ is asymptotically equal to

\begin{equation*}
    \frac{(\log x)^2}{2 (\log a) (\log b)}
\end{equation*}

Recent years, there appeared several papers studying  solutions $(m, n)$ to the following Diophantine equation $(\ref{ReCurequation})$, where $\{U_n\}_{n \geqslant 0}$ and $\{G_m\}_{m \geqslant 0}$ are  given linear recurrence sequences:

\begin{equation}\label{ReCurequation}
    U_n - G_m = c 
\end{equation}

For instance, in \cite{FibonacciPower2}, the authors consider the special situation, $U_n = F_n$, the Fibonacci numbers, $G_m = 2^m$. In \cite{TribonacciPower2}, $U_n = T_n$, the Tribonacci numbers, $ G_m = 2^m$. In \cite{PinkI},  $U_n = F_n$, $G_m = T_m$. These papers find all integers $c$ having two different representations as $c = U_n - G_m$ for some 
integers $m \geqslant M_0, n \geqslant N_0$, where $M_0$ and $N_0$ are fixed integers depending on the recurrence sequences.

Furthermore, in \cite{Pink}, the authors consider the general linear recurrence sequences (with some subtle requirements). They prove that there exists a  effectively computable finite set $\mathcal{C}$ such that there exist at least two solutions  $(m, n)$ for the  equation $(\ref{ReCurequation})$ if and only if $c \in \mathcal{C}$.

In this paper, we consider the problem that how many $c$ can make equation (\ref{ReCurequation}) have at least one solution, where $|c| \leqslant x$. We find an find  an asymptotic formula for the number of such integers $c$ when $x \to \infty$. To state our result, we make some definitions first.
\bigskip

We say that  $\{U_n\}_{n \geqslant 0}$ is a linear recurrence sequence defined over the integers if for some positive integer $k$, we have
\begin{align*}
    U_{n+k} = \sum_{i=0}^{k-1}\xi_i U_{n+i},  ~~\forall n \geqslant 0,
\end{align*}

where $\xi_i \in \Z $ are fixed and $U_0, U_1, \dots, U_{k-1}$ are given integers.
\bigskip

The characteristic polynomial of $\{U_n\}_{n \geqslant 0}$ is defined as:

\begin{align*}
    f(X) = X^k - \sum_{i=0}^{k-1}\xi_i x^i = \prod_{i=1}^t(X - \alpha_i)^{\sigma_i},
\end{align*}

where $\sigma_i \in \N$  and $\alpha_1,  \dots, \alpha_{t}$ are distinct roots of $f(X)$, called characteristic roots of  $\{U_n\}_{n \geqslant 0}$. 
\bigskip

 Let  $\alpha_1, \alpha_2, \cdots, \alpha_t ~(t \geqslant 1)~$ be the characteristic roots of $\{U_n\}_{n \geqslant 0}$ and $\beta_1, \beta_2, \cdots, \beta_s ~(s \geqslant 1)~$ the characteristic roots of $\{G_m\}_{m  \geqslant 0}$. Suppose $|\alpha_1 | > |\alpha_2| \geqslant \dots \geqslant |\alpha_t|$ and $|\beta_1 | > |\beta_2| \geqslant \dots \geqslant |\beta_s|$.
 Then we call $\alpha = \alpha_1$ the \textbf{dominant root} of $\{U_n\}_{n \geqslant 0}$ and $\beta = \beta_1$ the dominant root of $\{G_m\}_{m  \geqslant 0}$. Furthermore, we call $\{U_n\}_{n \geqslant 0}$ and $\{G_m\}_{m \geqslant 0}$ are \textbf{dominant to each other} if we additionally have $|\alpha| > |\beta_2|$ (or $\beta$ is the unique characteristic root of $\{G_m\}_{m\geqslant 0}$ ) and $|\beta| > |\alpha_2|  $ (or $\alpha$ is the unique characteristic root of $\{U_n\}_{n \geqslant 0}$ ).

For a fixed integer $N_0$, a sequence $\{U_n\}_{n \geqslant 0}$ is said to be \textbf{strictly increasing in absolute values} for $n \geqslant N_0$ if $|U_{n+1}| > |U_{n}|> 0 $ for all $n \geqslant N_0$. This condition can ensure that  all elements in the sequence are different from each other, starting from some element.

\begin{theorem}\label{Main}
Let $\{U_n\}_{n \geqslant 0}$ and $\{G_m\}_{m \geqslant 0}$ be two linear recurrence sequences defined over the integers. Suppose that $\{U_n\}_{n \geqslant 0}$ and $\{G_m\}_{m \geqslant 0}$ are dominant to each other with dominant roots $\alpha$ and $\beta$ respectively. Furthermore, suppose that $\alpha$ and $\beta$ are multiplicatively independent, $|\alpha| >1 $ and  $|\beta| >1$. Suppose also that $\{U_n\}_{n \geqslant 0}$ and $\{V_m\}_{m \geqslant 0}$ are strictly increasing in absolute values for $n \geqslant N_0$ and $m \geqslant  M_0$ respectively. Then the number of integers $c$ in the range $[-x, x]$ which can be written in the form $c = U_n - G_m$ is asymptotically equal to $\frac{(\log x)^2}{\log |\alpha| \cdot \log |\beta|}.$   In other words,
 \medskip
 
$$  \# \{c:  |c| \leqslant x,~ c = U_n - G_m \}  \sim \frac{(\log x)^2}{\log |\alpha|\cdot  \log |\beta|},  ~~~as ~x \to \infty$$
\end{theorem}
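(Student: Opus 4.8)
The plan is to reduce the counting problem to a lattice-point count governed by the dominant-root asymptotics, and then to show that the contribution of the "off-diagonal" pairs and the collisions are negligible. First I would establish the basic growth estimate: because $\alpha$ is the dominant root of $\{U_n\}$ with $|\alpha|>1$, the Binet-type formula gives $U_n = P(n)\alpha^n + O(|\alpha_2|^n \cdot \mathrm{poly}(n))$ for a nonzero constant-or-polynomial $P$, hence $\log|U_n| = n\log|\alpha| + O(\log n)$; similarly $\log|G_m| = m\log|\beta| + O(\log m)$. Combined with the hypothesis that both sequences are strictly increasing in absolute value for $n\geqslant N_0$, $m\geqslant M_0$, this shows that $|U_n - G_m|\leqslant x$ forces (up to the finitely many small indices, which contribute $O(1)$ distinct values of $c$) both $n\log|\alpha|\leqslant \log x + O(\log\log x)$ and $m\log|\beta|\leqslant \log x + O(\log\log x)$, because the larger of $|U_n|,|G_m|$ dominates $|U_n-G_m|$ up to a bounded factor unless $|U_n|$ and $|G_m|$ are very close. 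So the set of admissible pairs $(n,m)$ is essentially contained in the rectangle $n\leqslant \frac{\log x}{\log|\alpha|}(1+o(1))$, $m\leqslant \frac{\log x}{\log|\beta|}(1+o(1))$, which has $\sim \frac{(\log x)^2}{\log|\alpha|\log|\beta|}$ points. This gives the upper bound for the count of representable $c$, modulo the injectivity issue below.

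The crux is the matching lower bound, i.e. showing that distinct pairs $(n,m)$ in (most of) this rectangle give distinct values $c = U_n - G_m$, so that we are not overcounting. Here I would invoke the multiplicative independence of $\alpha$ and $\beta$ together with a Baker-type lower bound for linear forms in logarithms: if $(n_1,m_1)\neq(n_2,m_2)$ and $U_{n_1}-G_{m_1} = U_{n_2}-G_{m_2}$, then $U_{n_1}-U_{n_2} = G_{m_1}-G_{m_2}$, and after taking the dominant terms this yields an inequality of the shape $\bigl|\,|\alpha|^{n_1}\cdot(\text{stuff}) - |\beta|^{m_1}\cdot(\text{stuff})\,\bigr|$ being extremely small relative to its naive size, which by the theory of linear forms in logarithms (Baker–Wüstholz, or Matveev) forces $n_1,m_1$ (hence all indices) to be bounded by an absolute constant. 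Thus all collisions occur among $O(1)$ pairs, contributing only $O(1)$ to any discrepancy between the number of pairs and the number of distinct $c$. One must be slightly careful that the "dominant term extraction" is legitimate: this is exactly where the hypothesis "dominant to each other" ($|\alpha|>|\beta_2|$ and $|\beta|>|\alpha_2|$) is used, so that $U_n$ is genuinely governed by $\alpha^n$ and $G_m$ by $\beta^m$ with a power-saving error, rather than by a competing root of equal modulus.

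Assembling these pieces: let $R(x)$ be the number of representable $c$ with $|c|\leqslant x$ and let $\Pi(x)$ be the number of pairs $(n,m)$ with $n\geqslant N_0$, $m\geqslant M_0$ and $|U_n - G_m|\leqslant x$. The growth estimate shows $\Pi(x) = \frac{(\log x)^2}{2\log|\alpha|\log|\beta|} + o((\log x)^2)$ — wait, I should be careful about the constant: the region is not the full rectangle but is cut down by the constraint relating $n$ and $m$ through the size of $U_n-G_m$; I expect the correct region to be $\{(n,m): n\log|\alpha|\leqslant \log x,\ m\log|\beta|\leqslant \log x\}$ essentially \emph{without} a further diagonal restriction (since for, say, $n$ large and $m$ small we still have $|U_n-G_m|\approx|U_n|\leqslant x$), so $\Pi(x)\sim \frac{(\log x)^2}{\log|\alpha|\log|\beta|}$, matching the theorem. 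Then the injectivity statement gives $R(x) = \Pi(x) - O(1)$, and possibly one must also subtract the $O(\log x)$ pairs lying within $o(\log x)$ of the boundary of the region to control the error term, which is absorbed. The main obstacle is making the linear-forms-in-logarithms argument uniform enough to conclude that the exceptional set of colliding pairs is genuinely finite (not just "sparse"), and handling the bookkeeping of the polynomial factors $P(n)$ in the Binet formula and the finitely many starting indices where monotonicity may fail; both are technical rather than conceptual, and the multiplicative independence hypothesis is precisely what makes the Baker-type bound applicable.
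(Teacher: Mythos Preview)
Your overall architecture---count pairs $(n,m)$ with $|U_n-G_m|\leqslant x$, show this count is $\sim (\log x)^2/(\log|\alpha|\log|\beta|)$, then invoke a Baker-type result to pass from pairs to distinct values of $c$---matches the paper's. But there is a genuine gap in your upper bound for the pair count, and it is not a technicality.

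You assert that $|U_n-G_m|\leqslant x$ forces $n\log|\alpha|\leqslant \log x + O(\log\log x)$ (and similarly for $m$), ``because the larger of $|U_n|,|G_m|$ dominates $|U_n-G_m|$ up to a bounded factor unless $|U_n|$ and $|G_m|$ are very close.'' The ``unless'' clause is exactly the hard case, and you never return to it. Nothing in your growth estimates prevents pairs with $n,m\gg \log x$ from satisfying $|U_n-G_m|\leqslant x$ by cancellation: a priori $|a(n)\alpha^n-b(m)\beta^m|$ could be tiny for arbitrarily large $n,m$. Think of $|2^n-3^m|\leqslant x$: the inequality $|2^n|\leqslant |3^m|+x$ gives no bound on $n$ in terms of $x$ alone. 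This is precisely where the paper applies Matveev's theorem---writing $\Lambda=\tfrac{a(n)}{b(m)}\alpha^n\beta^{-m}-1$ and using the lower bound $\log|\Lambda|>-C(1+\log 3m)\log m$ to deduce $x/(|b(m)|\,|\beta|^m)>\tfrac{1}{2}m^{-C-C\log 3m}$, hence $m\leqslant(1+\eta)\log x/\log|\beta|$. The ``dominant to each other'' hypothesis is used here too, to make the tails $u_n,g_m$ negligible against this lower bound.

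In short, you have deployed linear forms in logarithms in only one of the two places it is needed. Your use of it for injectivity (all collisions confined to $O(1)$ pairs) is correct and is essentially the Chim--Pink--Ziegler finiteness theorem that the paper quotes as a black box. But the paper \emph{also} uses Matveev directly to establish the upper bound on $S(x)$, and that step is simply missing from your plan. Without it the rectangle containment is false as stated and the upper bound on the number of representable $c$ does not follow.
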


\bigskip

\begin{corollary}
Assume the same conditions for $\{U_n\}_{n \geqslant 0}$ and $\{G_m\}_{m \geqslant 0}$ as in Theorem \ref{Main}. Then the density of integers of the form $U_n - G_m$ is $0$.
\end{corollary}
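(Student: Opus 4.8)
The plan is to deduce the Corollary directly from Theorem \ref{Main}, since all the substantive work has already been carried out there. Recall that the (natural) density of a set $S \subseteq \Z$ is defined as
$$ d(S) = \lim_{x \to \infty} \frac{\#\{c \in S : |c| \leqslant x\}}{2x+1}, $$
the denominator being the number of integers in the range $[-x,x]$. Applying this definition to the set $S = \{c : c = U_n - G_m\}$ and invoking Theorem \ref{Main}, I would first replace the numerator by its known asymptotic: by the theorem, $\#\{c : |c| \leqslant x, \ c = U_n - G_m\} \sim \frac{(\log x)^2}{\log|\alpha| \cdot \log|\beta|}$ as $x \to \infty$.

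It then remains only to evaluate the resulting limit. Since $\log|\alpha|$ and $\log|\beta|$ are fixed positive constants (here we use the hypotheses $|\alpha| > 1$ and $|\beta| > 1$, which guarantee these logarithms are positive and nonzero), the density equals
$$ d(S) = \lim_{x \to \infty} \frac{(\log x)^2}{(2x+1)\,\log|\alpha| \cdot \log|\beta|} = \frac{1}{\log|\alpha| \cdot \log|\beta|}\lim_{x \to \infty} \frac{(\log x)^2}{2x+1}. $$
The elementary fact that $(\log x)^2 = o(x)$ as $x \to \infty$ forces this limit to be $0$, which yields $d(S) = 0$ and completes the proof.

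There is no genuine obstacle here: the entire analytic difficulty is absorbed into Theorem \ref{Main}, and the Corollary is a one-line consequence of the sub-linear growth of $(\log x)^2$ against the linear count $2x+1$ of integers in $[-x,x]$. The only point requiring a word of care is ensuring the constant $\frac{1}{\log|\alpha|\cdot\log|\beta|}$ is finite and nonzero, so that it may be pulled outside the limit without affecting the conclusion; this is precisely where the standing assumptions $|\alpha|>1$ and $|\beta|>1$ from Theorem \ref{Main} are used.
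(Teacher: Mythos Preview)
Your proposal is correct and matches the paper's approach: the paper states the Corollary immediately after Theorem~\ref{Main} without giving a separate proof, treating it as an obvious consequence of the asymptotic $\#\{c:|c|\leqslant x,\ c=U_n-G_m\}\sim (\log x)^2/(\log|\alpha|\log|\beta|)$ together with $(\log x)^2=o(x)$. Your write-up makes this implicit step explicit, which is entirely appropriate.
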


\bigskip

\begin{example}
The Fibonacci numbers $\{F_n\}_{n \geqslant 0}$ are defined by $F_{n+2} = F_n + F_{n+1}$ and $F_0 = 0, F_1 = 1 $. $\{F_n\}_{n \geqslant 0}$ has a dominant root $\alpha = \frac{\sqrt{5}+1}{2} > 1$.  Another characteristic root is $\frac{1-\sqrt{5}}{2}$ with absolute value less than 1.The  number of integers $c$ in the range $[-x, x]$ which can be written in the form $c = F_n - 2^m$ is asymptotically equal to

$$  \frac{(\log x)^2}{\log (\frac{\sqrt{5}+1}{2}) \cdot \log  2}$$

\end{example}

\bigskip

\begin{example}
The Tribonacci numbers $\{T_m\}_{m \geqslant 0}$ are defined by $T_{m+3} = T_m + T_{m+1} + T_{m+2} $ and $T_0 = 0, T_1 = 1, T_2 = 1 $. $\{T_m\}_{m \geqslant 0}$ has a dominant root $\alpha = \frac{1}{3}(1+  \sqrt[3]{19+3 \sqrt{33}}   + \sqrt[3]{19 -3 \sqrt{33}}  ) > 1$.  Two other characteristic roots of $\{T_m\}_{m \geqslant 0}$ are complex roots with absolute values less than 1. The  number of integers $c$ in the range $[-x, x]$ which can be written in the form $c = F_n - T_m$ is asymptotically equal to

$$  \frac{(\log x)^2}{\log (\frac{\sqrt{5}+1}{2}) \cdot \log  (\frac{1}{3}(1+ \sqrt[3]{19+3 \sqrt{33}}   + \sqrt[3]{19 -3 \sqrt{33}}  ) )}$$

\end{example}

\bigskip

By the following theorem in $\cite{Pink}$, we know that if   $|c|$ is sufficiently large, then there exists at most one solution $(n, m)$ such that $c = U_n - G_m$. 
\bigskip

\begin{theorem} [Chim, Pink, Ziegler]
Suppose that $\{U_n\}_{n \geqslant 0}$ and $\{V_m\}_{m \geqslant 0}$ be two linear recurrence sequences defined over the integers with dominant roots $\alpha$ and $\beta$ respectively. Furthermore, suppose that $\alpha$ and $\beta$ are multiplicatively independent. Suppose also that $\{U_n\}_{n \geqslant 0}$ and $\{V_m\}_{m \geqslant 0}$ are strictly increasing in absolute values for $n \geqslant N_0$ and $m \geqslant  M_0$ respectively. Then there exists a finite set $\mathcal{C}$ such that the integer $c$ has at least two distinct representations of the form $U_n - V_m$ with $n \geqslant N_0$ and $m \geqslant  M_0$, if and only if $c \in \mathcal{C}$. The set $\mathcal{C}$ is effectively computable.
\end{theorem}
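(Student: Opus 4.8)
\section*{Proof proposal}

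The plan is to produce the set $\mathcal{C}$ by hand: let $\mathcal{C}$ be the set of integers $c$ having at least two distinct representations $c = U_n - V_m$ with $n \geqslant N_0$, $m \geqslant M_0$, and prove that this set is finite and that an explicit bound on the indices can be written down. The stated equivalence is then tautological, and effectivity of $\mathcal{C}$ reduces to effectivity of the index bound. So suppose $c = U_n - V_m = U_{n'} - V_{m'}$ with $(n,m) \neq (n',m')$ and all indices above the thresholds; subtracting gives the four-index equation
\[
 U_n - U_{n'} = V_m - V_{m'}.
\]
First I would record dominant-root asymptotics. Since the characteristic polynomials have integer coefficients, non-real roots occur in conjugate pairs of equal modulus; as $\alpha$ and $\beta$ are \emph{strictly} dominant they must therefore be real, and there are constants $a,b$ and bases $\gamma_U < |\alpha|$, $\gamma_V < |\beta|$ with $|U_n - a\alpha^n| \ll \gamma_U^{\,n}$ and $|V_m - b\beta^m| \ll \gamma_V^{\,m}$. (A multiple dominant root inserts a polynomial factor $P(n)$, contributing only $O(\log n)$ terms below, which is harmless.) In particular $|U_n| \asymp |\alpha|^n$, and because $|\alpha|>1$ we get $|U_n - U_{n'}| \asymp |\alpha|^{\max(n,n')}$ for $n \neq n'$, and similarly for $V$.

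Next comes a reduction. Strict monotonicity in absolute value makes each sequence injective past its threshold, so $n=n'$ would force $V_m = V_{m'}$ and hence $m=m'$, a contradiction; thus $n \neq n'$ and, symmetrically, $m \neq m'$. Assume $n>n'$. Comparing the two sides of the four-index equation by size yields $|\alpha|^{n} \asymp |\beta|^{\max(m,m')}$, so the four indices are all comparable and a bound on the largest one bounds them all; moreover $n\log|\alpha| = \max(m,m')\log|\beta| + O(1)$.

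The heart of the argument is a two-stage application of lower bounds for linear forms in logarithms (Baker's method, in Matveev's sharp effective form). Dividing the four-index equation by its dominant term produces a linear form $\Lambda = n\log|\alpha| - m\log|\beta| + \log|a/b|$ (with the sign of the dominant terms tracked separately when $\alpha$ or $\beta$ is negative) whose modulus is bounded above by a quantity of size $\max\!\big(|\alpha|^{-(n-n')},\,|\beta|^{-(m-m')},\,(\gamma_U/|\alpha|)^n,\,(\gamma_V/|\beta|)^m\big)$. Matveev's inequality gives $|\Lambda| \gg \big(\max(n,m)\big)^{-C}$ whenever $\Lambda \neq 0$, and multiplicative independence of $\alpha$ and $\beta$ is exactly what forbids $\Lambda$ (and the later forms) from vanishing, i.e.\ it rules out an infinite degenerate family. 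Comparing the two bounds controls the gaps $n-n'$ and $m-m'$ by $O(\log \max(n,m))$. In the second stage, for each such bounded gap I would view $U_n - U_{n'}$ and $V_m - V_{m'}$ as new linear recurrences with the same dominant roots, whose leading coefficients are $a(1-\alpha^{-(n-n')})$ and $b(1-\beta^{-(m-m')})$; the resulting linear form now has a genuinely geometric error (governed only by the subdominant roots), and a second application of Matveev forces $\max(n,m) \ll (\log \max(n,m))^2$, hence an absolute, effective bound on all indices. A finite search then yields $\mathcal{C}$.

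The step I expect to be the main obstacle is precisely the small-gap regime, where the dominant terms nearly cancel, the first linear form becomes uninformative, and one must pass to the differenced recurrences; making the two stages close requires quantifying how the height of $1-\alpha^{-(n-n')}$ (which grows only linearly in the gap) interacts with Matveev's constant, together with the bookkeeping of signs when $\alpha,\beta<0$ and of the polynomial factors from multiple dominant roots.
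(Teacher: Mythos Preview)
This theorem is not proved in the paper at all: it is quoted from Chim, Pink and Ziegler \cite{Pink} and used as a black box to pass from Theorem~\ref{Main} to Theorem~\ref{THMinequality}. There is therefore no argument in the present paper against which to compare your proposal.

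For what it is worth, your outline is in the spirit of the original Chim--Pink--Ziegler proof: subtract two representations to get $U_n-U_{n'}=V_m-V_{m'}$, use one pass of Matveev to show the gaps $n-n'$, $m-m'$ are $O(\log N)$ with $N=\max(n,n',m,m')$, then absorb the bounded gaps into the leading coefficient and apply Matveev again to bound $N$ absolutely. Two points deserve care if you want to turn the sketch into a proof. First, your form $\Lambda=n\log|\alpha|-m\log|\beta|+\log|a/b|$ is not the quantity that is small after dividing the four--index equation by its dominant term; what is actually small is $\frac{a}{b}\alpha^{n}\beta^{-m}-1$ up to terms of size $|\alpha|^{-(n-n')}$, $|\beta|^{-(m-m')}$ and the subdominant errors, and comparing with Matveev only bounds the \emph{smaller} of the two gaps by $O(\log N)$ in the first pass, not both simultaneously; the second pass then handles the remaining gap together with $N$. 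Second, when the dominant root has multiplicity greater than one the leading factor is a genuine polynomial $a(n)$, which adds a $\log N$ to the height $A_1$ fed into Matveev (exactly as in this paper's own use of Lemma~\ref{Loglm} in the proof of Theorem~\ref{THMinequality}); this has to be tracked through both stages rather than dismissed parenthetically.
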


\bigskip

So \textbf{Theorem \ref{Main}} is equivalent to a corollary of \textbf{Theorem \ref{THMinequality}}.
\bigskip

\begin{theorem}\label{THMinequality} Assume the same conditions for $\{U_n\}_{n \geqslant 0}$ and $\{G_m\}_{m \geqslant 0}$ as in Theorem \ref{Main}.
The number of solutions $(n, m)$ of the inequality $| U_n - G_m | \leqslant x$ is denoted by $S(x)$, then

\begin{align*}
 \frac{S(x)}{(\log x )^2} \geqslant \frac{1}{\log |\alpha|\cdot \log |\beta|} + \resizebox{0.34cm}{!}{$\Theta$}  ( \frac{\log \log x}{\log x}), ~\forall\,x \geqslant 3
\end{align*}

Moreover, $\forall\, \eta \in (0, 1),$ $\exists \,x_1 = x_1 (\eta)$ depending on $\eta$ such that 

\begin{align*}
 \frac{S(x)}{(\log x )^2} \leqslant (1+ \eta)^2\frac{1}{\log |\alpha|\cdot \log |\beta|} + \resizebox{0.34cm}{!}{$\Theta$}( \frac{\log \log x}{\log x}), ~\forall\, x \geqslant x_1
\end{align*}

\end{theorem}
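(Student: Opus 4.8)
The plan is to sandwich $S(x)$ between the cardinalities of two boxes in the lattice of pairs $(n,m)$, using throughout the standard growth estimates for linear recurrences: with $\sigma_1,\tau_1$ the multiplicities of the dominant roots, $c_1 n^{\sigma_1-1}|\alpha|^{n}\le|U_n|\le c_2 n^{\sigma_1-1}|\alpha|^{n}$ for $n\ge N_0$, and likewise $|G_m|\asymp m^{\tau_1-1}|\beta|^{m}$ for $m\ge M_0$. I first note that $\alpha,\beta$ are necessarily real (a non-real dominant root would share its modulus with its complex conjugate, violating dominance), so $\log|\alpha|,\log|\beta|>0$. For the lower bound I would simply count the pairs with $|U_n|\le x/2$ and $|G_m|\le x/2$, all of which satisfy $|U_n-G_m|\le x$: by the upper growth estimate the number of admissible $n$ is at least $\frac{\log x}{\log|\alpha|}-\Theta(\log\log x)$ (the $\log\log x$ loss coming only from the polynomial factor when $\sigma_1>1$), similarly for $m$, hence $S(x)\ge\frac{(\log x)^2}{\log|\alpha|\log|\beta|}-\Theta(\log x\log\log x)$; dividing by $(\log x)^2$ and enlarging the implied constant gives the first inequality for all $x\ge3$.

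For the upper bound, fix $\eta\in(0,1)$; I would split the solutions of $|U_n-G_m|\le x$ by whether $|U_n|\le x^{1+\eta/2}$. If so, then $|G_m|\le x^{1+\eta/2}+x\le x^{1+\eta}$ for $x$ large, so these solutions lie in the box $\{|U_n|\le x^{1+\eta}\}\times\{|G_m|\le x^{1+\eta}\}$, whose side lengths — via the lower growth estimate $|U_n|\ge c_1|\alpha|^{n}$ — are at most $\frac{(1+\eta)\log x}{\log|\alpha|}+O(1)$ and $\frac{(1+\eta)\log x}{\log|\beta|}+O(1)$; this accounts for at most $(1+\eta)^2\frac{(\log x)^2}{\log|\alpha|\log|\beta|}+O(\log x)$ pairs. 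The main work is to show that for $x\ge x_1(\eta)$ there is no solution with $|U_n|>x^{1+\eta/2}$. For such a solution, $|U_n-G_m|\le x<|U_n|$ forces $U_n,G_m$ to have the same sign and $|G_m|\asymp|U_n|$, hence $m\asymp n\asymp\log|U_n|$; writing each of $U_n,G_m$ as its dominant term plus an $O(|U_n|^{1-\delta})$ tail, I get
\[
 |U_n-G_m|\;\ge\;c_3|U_n|\bigl|1-e^{\Lambda}\bigr|\;-\;O\!\bigl(|U_n|^{1-\delta}\bigr),\qquad \Lambda:=m\log|\beta|-n\log|\alpha|+\log\frac{|Q(m)|}{|P(n)|},
\]
and $|U_n-G_m|\le x=o(|U_n|)$ forces $\Lambda=o(1)$, so $\bigl|1-e^{\Lambda}\bigr|\asymp|\Lambda|$. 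Since $\alpha,\beta$ are multiplicatively independent, Baker's theorem on linear forms in logarithms (for instance Matveev's estimate, allowing for the polynomial heights of $|P(n)|,|Q(m)|$ when a dominant root is repeated) gives $|\Lambda|\ge\exp(-c(\log\max(n,m))^{A})$ whenever $\Lambda\ne0$, with effective $c,A$. As $\max(n,m)\asymp\log|U_n|$, this yields $|U_n-G_m|\gg|U_n|\exp(-c'(\log\log|U_n|)^{A})$, so $|U_n-G_m|\le x$ forces, after bootstrapping, $|U_n|\le x\exp(c''(\log\log x)^{A})$, which is $\le x^{1+\eta/2}$ once $x\ge x_1(\eta)$ since $(\log\log x)^{A}=o(\log x)$ — a contradiction. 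The (at most one per $n$) pairs with $\Lambda=0$, where the argument does not apply, number $O(\log x)$ and are absorbed in the error. Combining the two classes gives $\frac{S(x)}{(\log x)^2}\le(1+\eta)^2\frac1{\log|\alpha|\log|\beta|}+O(\tfrac{\log\log x}{\log x})$ for $x\ge x_1(\eta)$.

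I expect the ``large solution'' case to be the main obstacle: all the rest is elementary lattice-point counting, but excluding solutions with $|U_n|$ far larger than $x$ requires setting up the linear form in logarithms correctly — isolating the dominant term, controlling the subdominant tail, accommodating a dominant root that may be negative and of multiplicity larger than $1$, and treating the degenerate vanishing of the form — and then converting the weak but fully effective resulting lower bound for $|U_n-G_m|$ into the clean statement that every solution satisfies $|U_n|\le x^{1+\eta}$. Finally, granting Theorem~\ref{THMinequality}, Theorem~\ref{Main} follows: by the Chim--Pink--Ziegler theorem each sufficiently large $|c|$ has at most one representation $c=U_n-G_m$, so $\#\{c:|c|\le x,\ c=U_n-G_m\}=S(x)+O(1)$, and letting $\eta\to0$ in the two inequalities of Theorem~\ref{THMinequality} pins down $S(x)\sim\frac{(\log x)^2}{\log|\alpha|\log|\beta|}$.
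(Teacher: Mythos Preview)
Your proposal is correct and follows essentially the same strategy as the paper: an elementary box count for the lower bound, and Matveev's lower bound for linear forms in logarithms for the upper bound. The only differences are organizational. For the lower bound the paper absorbs the polynomial factor $n^{d}$ by passing to $(|\alpha|+\epsilon)^{n}$ with $\epsilon=1/\log x$, whereas you handle it directly; both produce the same $\Theta(\log\log x/\log x)$ loss. For the upper bound the paper splits into the cases $m>n$ and $m\le n$, divides by the larger dominant term, and bounds first $\max(m,n)$ and then the other index; you instead split by whether $|U_n|\le x^{1+\eta/2}$, use $|G_m|\asymp|U_n|$ to control both tails simultaneously, and bootstrap. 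Your packaging is slightly cleaner in that, once $|G_m|\asymp|U_n|$ is in hand, the tail of each sequence is $O(|U_n|^{1-\delta})$ using only ordinary dominance, so the ``dominant to each other'' hypothesis is not actually invoked in your upper-bound step (the paper uses it to control $|u_n|/|\beta|^{m}$ when $m>n$). Otherwise the two arguments are the same proof.
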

\medskip

\textbf{Remark:}~we use the theta notation $\Theta(\cdot)$, where $g = \Theta(h)$ means that $ c_1 h \leqslant  |g| \leqslant C_2 h $ for some positive constants $c_1$ and $C_2$. Moreover, if both the dominant roots $\alpha$ and $\beta$ have multiplicity equal to one, then the remainder terms $\Theta( \frac{\log \log x}{\log x})$ can be improved to  $\Theta( \frac{1}{\log x})$.
\bigskip

\begin{corollary}
Assume the same conditions for $\{U_n\}_{n \geqslant 0}$ and $\{G_m\}_{m \geqslant 0}$ as in Theorem \ref{Main}.
The number of solutions $(n, m)$ of the inequality $| U_n - G_m | \leqslant x$ is denoted by $S(x)$, then

\begin{align*}
 \lim_{x \to \infty} \frac{S(x)}{(\log x )^2} = \frac{1}{\log |\alpha|\cdot \log |\beta|}
\end{align*}

\end{corollary}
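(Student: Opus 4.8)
The plan is to derive this corollary as an immediate consequence of Theorem \ref{THMinequality}, by squeezing the $\liminf$ and the $\limsup$ of $S(x)/(\log x)^2$; no genuinely new argument is needed beyond a careful reading of the quantifier on $\eta$.

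First I would apply the lower bound of Theorem \ref{THMinequality}, which holds for every $x \geq 3$. Since the remainder term $\Theta(\log\log x/\log x)$ tends to $0$ as $x \to \infty$ (the implied constants being fixed, and its sign being irrelevant to the conclusion), taking the limit inferior of both sides gives
\[
\liminf_{x\to\infty}\frac{S(x)}{(\log x)^2} \;\geq\; \frac{1}{\log|\alpha|\cdot\log|\beta|}.
\]

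For the matching upper estimate, fix an arbitrary $\eta \in (0,1)$. Theorem \ref{THMinequality} supplies a threshold $x_1 = x_1(\eta)$ so that for all $x \geq x_1$ one has $S(x)/(\log x)^2 \leq (1+\eta)^2/(\log|\alpha|\log|\beta|) + \Theta(\log\log x/\log x)$. Holding $\eta$ fixed and letting $x \to \infty$ again annihilates the error term, so
\[
\limsup_{x\to\infty}\frac{S(x)}{(\log x)^2} \;\leq\; \frac{(1+\eta)^2}{\log|\alpha|\cdot\log|\beta|}.
\]
The left-hand side does not involve $\eta$, hence I may let $\eta \to 0^+$ on the right, obtaining $\limsup_{x\to\infty} S(x)/(\log x)^2 \leq 1/(\log|\alpha|\cdot\log|\beta|)$. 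Combined with the $\liminf$ inequality above, this forces $\liminf = \limsup$, so the limit exists and equals the asserted value.

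The only point that needs attention — and the closest thing to an obstacle — is the order of the two limiting operations: the threshold $x_1$ depends on $\eta$, so one cannot optimise over $\eta$ and $x$ simultaneously; the argument must send $x \to \infty$ first, for each fixed $\eta$, and only afterwards send $\eta \to 0^+$. For a statement about the limit this is completely standard and harmless. All of the genuine analytic content — estimating the number of pairs $(n,m)$ with $|U_n - G_m| \leq x$, and in particular controlling the fluctuation recorded by the $\Theta(\log\log x/\log x)$ term — is already carried by Theorem \ref{THMinequality}, so the proof of this corollary is in effect a one-line deduction.
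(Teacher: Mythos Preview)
Your deduction is correct and matches the paper's intent: the corollary is stated immediately after Theorem~\ref{THMinequality} with no separate proof, so the paper treats it as an immediate consequence, and your squeeze argument via $\liminf$/$\limsup$ (first $x\to\infty$ for fixed $\eta$, then $\eta\to 0^+$) is exactly the standard way to unpack that implication. Your remark that the sign of the $\Theta$ term is irrelevant is also apt given the paper's convention that $g=\Theta(h)$ bounds only $|g|$.
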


\section{Preliminary results}

Estimates for lower bounds for linear forms in logarithms are key tools for our proof.  Baker and Wüstholz have obtained many results. For instance, \cite{Baker} provided an explicit bound. And in this paper, we will use the theorem proved by  Matveev \cite{Matveev}. 

For an algebraic number $\alpha$, suppose its minimal primitive polynomial over the integers is 

$$P(z) = a (z - z_1) (z - z_2) \cdots (z - z_d),$$

then the absolute logarithmic height  of $\alpha $  is equal to:

$$ h(\alpha):~ = \frac{1}{d}  (\log a + \sum_{ j = 1}^{d} \log  (\emph{\emph{\textbf{max}}} \{1,  |z_j|\}))$$

In the following is a modified version \cite{Almostpowers} of Matveev’s theorem \cite{Matveev}.

\begin{theorem} [Matveev]
Let $\gamma_1, \dots, \gamma_t$ be non-zero elements  in a number field $\mathbb{K}$ of degree $D$, and let $b_1, \dots, b_t$ be rational integers.

$$B = \emph{\textbf{max}} \{ |b_1|, \dots ,  |b_t|\}$$

and 

$$ A_i \geqslant \emph{\textbf{max}} \{ D h(\gamma_i), |\log \gamma_i|, 0.16 \}, ~~1 \leqslant i \leqslant t.$$

Assume that 

$$\Lambda : = \gamma_1^{b_1} \cdots \gamma_t^{b_t}  - 1, $$

is non-zero. Then,

$$\log |\Lambda| > -3 \times 30^{t+4} \times (t + 1)^{5.5} \times D^2 (1 + \log D) (1 + \log tB) A_1 \cdots A_t. $$
\end{theorem}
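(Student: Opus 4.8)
The plan is to prove this by Baker's method of linear forms in logarithms, in the sharpened quantitative form due to Matveev. Set $\lambda_i = \log\gamma_i$ (fixing determinations of the logarithms) and consider the linear form $L = b_1\lambda_1 + \cdots + b_t\lambda_t$, so that $\Lambda = e^{L} - 1$. Since $|e^{L}-1| \geqslant \tfrac12|L|$ whenever $|L|$ is small, it suffices to bound $|L|$ from below, and I would argue by contradiction: assume $|L|$ (equivalently $|\Lambda|$) is smaller than the asserted bound and derive an impossibility. The hypotheses enter as follows: the $A_i$ control the archimedean sizes and heights of the $\gamma_i$, the factor $D^2(1+\log D)$ comes from the degree of $\mathbb{K}$, the term $1+\log tB$ from the size of the exponents $b_i$, and the nonvanishing assumption $\Lambda \neq 0$ guarantees that the auxiliary objects constructed below are genuinely nonzero.

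First I would construct an auxiliary function. Introduce integer parameters $S$ (a number of interpolation points) and $T$ (an order of vanishing), to be optimized at the end, and use Siegel's lemma (or, following Laurent's streamlining, an interpolation determinant, which avoids Siegel's lemma altogether) to produce a nonzero polynomial $P \in \mathbb{K}[X_1,\dots,X_t]$ of controlled partial degrees such that the exponential polynomial
$$\Phi(z) = P\bigl(\gamma_1^{z},\dots,\gamma_t^{z}\bigr) = \sum_{\mathbf{j}} p_{\mathbf{j}}\,\exp\bigl((j_1\lambda_1 + \cdots + j_t\lambda_t)z\bigr)$$
vanishes to order at least $T$ at each integer $z = 0,1,\dots,S$. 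The crucial quantitative input is that the number of unknown coefficients $p_{\mathbf{j}}$ must exceed the number of linear vanishing conditions, while the height of the resulting solution is governed by the $A_i$ and $D$; this is where the product $A_1\cdots A_t$ and the degree factors first enter the bound.

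Next comes the analytic extrapolation. Because $\Lambda$ is assumed extremely small, the values $\gamma_i^{z}$ are very close to those predicted by the approximate relation $L \approx 0$, so a Schwarz-lemma and maximum-modulus estimate on a large disc forces $\Phi$ to be small at many further integer points; after rounding an algebraic quantity that is either zero or, by Liouville's inequality via the product formula, bounded below, one concludes that $\Phi$ in fact vanishes to high order on a set far larger than the one imposed in the construction. I would then invoke a zero estimate—a multiplicity estimate on the commutative group variety $\mathbb{G}_a \times \mathbb{G}_m^{t}$ with its one-parameter subgroup $u \mapsto (u,\gamma_1^{u},\dots,\gamma_t^{u})$, in the style of Philippon, or Matveev's own combinatorial substitute—to deduce that such extensive vanishing is impossible unless $P$ is the zero polynomial, contradicting its construction once the hypothesis $\Lambda\neq 0$ is used to exclude degenerate multiplicative relations among the $\gamma_i$. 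Balancing the analytic upper bound against the arithmetic lower bound and optimizing $S$ and $T$ then yields the explicit constant.

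The main obstacle is obtaining the displayed dependence on the number of logarithms $t$, namely the factor $30^{t+4}(t+1)^{5.5}$. Crude applications of Siegel's lemma and of the zero estimate lose exponentially more in $t$, and it is precisely here that Matveev's essential innovation lies: an inductive refinement that reduces the $t$-variable problem to lower-dimensional ones and accounts sharply for the combinatorics of the vanishing conditions. I expect this inductive bookkeeping, together with the choice of an interpolation scheme fine enough to make it work, to be the genuinely hard and delicate part of the argument; the remaining ingredients—the Schwarz-lemma estimates, the Liouville bound, and the final optimization of parameters—are technical but essentially routine once the correct zero estimate and the inductive structure are in place.
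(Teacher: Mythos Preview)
The paper does not prove this statement at all: it is quoted as a known result, attributed to Matveev \cite{Matveev} (in the modified form given in \cite{Almostpowers}), and then used as a black box in the proof of Theorem~\ref{THMinequality}. There is nothing for your proposal to be compared against, because the paper's ``proof'' consists solely of the citation.

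Your sketch is a broadly accurate high-level outline of how results of this type are proved in the literature---auxiliary function or interpolation determinant, extrapolation via a Schwarz lemma, Liouville lower bound, zero estimate on a commutative algebraic group, and optimization of parameters---and you correctly flag that Matveev's specific contribution is the sharp dependence on $t$. But none of that work is carried out in the present paper, and reproducing Matveev's argument in full would be a substantial undertaking far beyond the scope of this note. For the purposes of this paper you should simply cite the theorem as the author does.
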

\bigskip

In order to use Matveev's theorem, one needs to require that $ \Lambda \neq 0 $.  The following two lemmas can show that $ \Lambda \neq 0 $ if $n$ and $m$ are sufficiently large.

\bigskip

\begin{lemma}[\cite{Pink}]\label{Fraclm}

Let $\mathbb{K}$ be a number field and suppose that $\alpha, \beta \in \mathbb{K}$ are algebraic numbers which are multiplicaticely independent. Then there exists an effetively computable constant $C_0 > 0$ such that
\bigskip

$$ h(\frac{\alpha^n}{\beta^m})\geqslant C_0\, \emph{\textbf{max}}\{|n|, |m|  \}, ~~~\forall n,m \in \Z $$
\end{lemma}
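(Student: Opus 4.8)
The plan is to convert the multiplicative hypothesis into a statement about the additive group of valuation vectors, where finite dimensionality (equivalence of norms) does the work. Fix the degree $D=[\mathbb{K}:\mathbb{Q}]$ and a finite set $S$ of places of $\mathbb{K}$ containing all archimedean places and large enough that $\alpha$ and $\beta$ are both $S$-units; normalise the absolute values $|\cdot|_v$ so that the product formula reads $\sum_v d_v\log|\gamma|_v=0$, where $d_v$ is the local degree. Then for every $S$-unit $\gamma$ one has $h(\gamma)=\frac{1}{2D}\sum_{v\in S}d_v\,\bigl|\log|\gamma|_v\bigr|$, since $|\gamma|_v=1$ for $v\notin S$. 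Writing $x_v=\log|\alpha|_v$ and $y_v=\log|\beta|_v$, and noting that $\alpha^n\beta^{-m}$ is an $S$-unit for all $n,m\in\Z$, this gives
$$ h\!\left(\frac{\alpha^n}{\beta^m}\right)=\frac{1}{2D}\sum_{v\in S}d_v\,\bigl|\,nx_v-my_v\,\bigr|=:\frac{1}{2D}\,F(n,m), $$
so the lemma amounts to the inequality $F(n,m)\geqslant c_0\,\max\{|n|,|m|\}$ for all real $n,m$ with some $c_0>0$, i.e.\ to showing that $F$ is a genuine norm on $\mathbb{R}^2$.

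The one thing to prove, then, is that the vectors $\bar\alpha=(x_v)_{v\in S}$ and $\bar\beta=(y_v)_{v\in S}$ are linearly independent over $\mathbb{R}$ — and this is exactly where multiplicative independence enters, but in its stronger ``$\mathbb{R}$'' incarnation. The bridge is discreteness. By Dirichlet's $S$-unit theorem the map $\gamma\mapsto(d_v\log|\gamma|_v)_{v\in S}$ carries $\mathcal{O}_{\mathbb{K},S}^{\times}$ onto a full lattice $\Lambda$ in the trace-zero hyperplane, with kernel the roots of unity of $\mathbb{K}$. Since $\bigl(\alpha^n\beta^{-m}\bigr)^k=1$ forces $\alpha^{nk}\beta^{-mk}=1$ and hence $nk=mk=0$, the element $\alpha^n\beta^{-m}$ is a root of unity only for $n=m=0$; thus the images of $\alpha$ and $\beta$ in $\Lambda$ are $\mathbb{Z}$-linearly independent. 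But two $\mathbb{Z}$-independent vectors in a lattice are automatically $\mathbb{R}$-independent: a line through the origin meets $\Lambda$ in a discrete, hence cyclic, subgroup, so if $\bar\alpha,\bar\beta$ lay on a common line they would both be integer multiples of one generator, producing a nontrivial $\mathbb{Z}$-relation. Hence $\bar\alpha,\bar\beta$ are $\mathbb{R}$-independent, $F$ is a seminorm vanishing only at the origin, i.e.\ a norm on $\mathbb{R}^2$, and by equivalence of norms $F$ attains a positive minimum $c_0$ on $\{\max\{|n|,|m|\}=1\}$; homogeneity then yields $F(n,m)\geqslant c_0\max\{|n|,|m|\}$ and the lemma with $C_0=c_0/(2D)$.

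To see that $C_0$ is effectively computable I would make the last step explicit rather than quote compactness: $\mathbb{R}$-independence of $\bar\alpha,\bar\beta$ means $x_vy_{v'}-x_{v'}y_v\neq0$ for some pair $v,v'\in S$ (found by inspecting the finitely many $2\times2$ minors, whose entries are the computable archimedean and non-archimedean valuations of $\alpha$ and $\beta$), and Cramer's rule then bounds $\max\{|n|,|m|\}$ by an explicit constant times $|nx_v-my_v|+|nx_{v'}-my_{v'}|\leqslant F(n,m)/\min\{d_v,d_{v'}\}$, giving $C_0$ in terms of $D$, the $d_v$, and those valuations. The main obstacle is precisely the passage from $\mathbb{Z}$- to $\mathbb{R}$-linear independence: an argument that only rules out exact relations $\alpha^n\beta^m=1$ would give $h(\alpha^n/\beta^m)>0$ for $(n,m)\neq(0,0)$ but not the linear growth in $\max\{|n|,|m|\}$, and it is this uniform growth that later feeds into Matveev's theorem.
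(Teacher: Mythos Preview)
Your argument is correct. The paper does not actually prove this lemma: it is quoted verbatim from the cited reference \cite{Pink} (Chim--Pink--Ziegler) and used as a black box, so there is no in-paper proof to compare against. Your route via the $S$-unit theorem---embed $\alpha,\beta$ as $S$-units, rewrite $h(\alpha^n/\beta^m)$ as the weighted $\ell^1$-norm $\tfrac{1}{2D}\sum_{v\in S}d_v|nx_v-my_v|$, upgrade $\mathbb{Z}$-independence of the log-vectors to $\mathbb{R}$-independence using discreteness of the Dirichlet lattice, and then invoke equivalence of norms on $\mathbb{R}^2$---is the standard one and is essentially how the result is obtained in the source. The effectivity remark via a nonvanishing $2\times2$ minor and Cramer's rule is also sound and gives an explicit $C_0$ in terms of computable local data of $\alpha$ and $\beta$.
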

\bigskip

\begin{lemma}[\cite{Pink}]\label{Loglm}
Let $\mathbb{K}$ be a number field. $p, q \in \mathbb{K}[x]$ are two arbitrary polynomials. Then there exists an effective constant $C$ depends on $p$ and $q$ such that
\bigskip

$$ h(\frac{p(n)}{q(m)})\leqslant C \log \emph{\textbf{max}}\{n, m  \}, ~~~ \forall n,m \in \Z, ~~  n,m \geqslant 2$$
\end{lemma}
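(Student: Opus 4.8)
The plan is to reduce everything to the standard functorial inequalities satisfied by the absolute logarithmic (Weil) height, observing first that for rational integers $n, m$ the values $p(n)$ and $q(m)$ are honest elements of $\mathbb{K}$, so $h(p(n)/q(m))$ makes sense whenever $q(m) \neq 0$. Write $p(x) = \sum_{i=0}^{d} a_i x^i$ and $q(x) = \sum_{j=0}^{e} b_j x^j$ with $a_i, b_j \in \mathbb{K}$. The inequalities I will lean on are the classical ones: $h(\gamma\delta) \leqslant h(\gamma) + h(\delta)$, $h(\gamma^{-1}) = h(\gamma)$, the sum bound $h(\gamma_1 + \cdots + \gamma_r) \leqslant \log r + \sum_{k} h(\gamma_k)$, together with $h(\gamma^{\ell}) = |\ell|\, h(\gamma)$ for $\ell \in \Z$ and $h(n) = \log|n|$ for a nonzero rational integer $n$.

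First I would bound $h(p(n))$ for $n \geqslant 2$. Viewing $p(n) = \sum_{i=0}^{d} a_i n^i$ as a sum of $d+1$ terms and applying the sum bound gives $h(p(n)) \leqslant \log(d+1) + \sum_{i=0}^{d} h(a_i n^i)$. Each summand splits via the product rule as $h(a_i n^i) \leqslant h(a_i) + h(n^i) = h(a_i) + i\log n$, whence $h(p(n)) \leqslant \big(\log(d+1) + \sum_{i=0}^{d} h(a_i)\big) + \tfrac{d(d+1)}{2}\log n =: C_p + D_p \log n$, where $C_p$ and $D_p = d(d+1)/2$ depend only on $p$. The identical argument yields $h(q(m)) \leqslant C_q + D_q \log m$ for $m \geqslant 2$, with $C_q, D_q$ depending only on $q$.

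Next I combine the two bounds. Assuming $q(m) \neq 0$, the product rule together with $h(q(m)^{-1}) = h(q(m))$ gives $h(p(n)/q(m)) \leqslant h(p(n)) + h(q(m)) \leqslant (C_p + C_q) + D_p\log n + D_q\log m$. Using $n, m \geqslant 2$ I replace both $\log n$ and $\log m$ by $\log\max\{n,m\}$, and I absorb the additive constant through $C_p + C_q \leqslant \frac{C_p + C_q}{\log 2}\,\log\max\{n,m\}$, valid because $\log\max\{n,m\} \geqslant \log 2 > 0$. Collecting terms produces $h(p(n)/q(m)) \leqslant C \log\max\{n,m\}$ with $C = \frac{C_p+C_q}{\log 2} + D_p + D_q$, an explicit constant depending only on $p$ and $q$; effectivity is immediate since every quantity above is explicit.

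There is no deep obstacle here: the content lies entirely in assembling the standard height estimates correctly. The points that demand care are purely bookkeeping. One must track the degree-weighted coefficients so that the multiplicative constant $C$ emerges independent of $n$ and $m$; one must retain the $\log r$ loss in the sum inequality so that $C_p, C_q$ stay finite and depend only on $p, q$; and one must absorb the additive constant into a multiplicative one, which is precisely why the hypothesis $n, m \geqslant 2$ (rather than $n, m \geqslant 1$, where $\log\max$ could vanish) is required. Finally, one should record the tacit assumption $q(m) \neq 0$, which excludes only the finitely many integers $m$ that are roots of $q$, so that the ratio is well defined throughout.
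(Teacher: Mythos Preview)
Your argument is correct: the reduction to the standard functorial height inequalities (sum, product, inverse, power) is exactly the right route, and your bookkeeping --- in particular the absorption of the additive constant via $\log\max\{n,m\}\geqslant\log 2$, which explains the hypothesis $n,m\geqslant 2$ --- is clean and complete. Note, however, that the paper does not supply its own proof of this lemma at all: it is simply quoted from \cite{Pink} (Chim--Pink--Ziegler) as a preliminary result, so there is nothing to compare your approach against. Your write-up is a legitimate self-contained proof of what the paper treats as a black box.
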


\section{ Proof of Theorem  \ref{THMinequality}}

By our assumptions, we can write $U_n$ and $G_m$ as the following, where $a_i(n)$ and $b_j(m)$ are some polynomials.

\begin{align*}
    U_n  = \sum_{i=1}^{t}a_i(n)\alpha_i^n,~~~~~ 
    G_m  = \sum_{j=1}^{s}b_j(m)\beta_j^m
\end{align*}

In this section, when we mean a constant, it may depend on $ \alpha_i, \beta_j$, the polynomials $a_i$, and $b_j$. But we may  not point this out.

\bigskip

\begin{proof}

Using the property of dominant roots,  we can fix positive integers $L$ and $d$, such that

\begin{equation}
    |U_n| \leqslant L \cdot n^{d} \cdot |\alpha|^n,~~ \forall \, n \in \N,
    \end{equation}

\begin{equation}
    |G_m| \leqslant L \cdot m^{d} \cdot |\beta|^m,~~ \forall \, m \in \N,   
\end{equation}

We will need the following lemma to get estimates for upper bounds for $n$ and $m$.

\begin{lemma}
$\exists \, \epsilon_0 \in (0, 1)$,  $\forall \epsilon \in (0, \epsilon_0],$ we can find a constant $K_\epsilon  > 1$, such that 
 
\begin{equation}\label{KforU}
    |U_n| \leqslant K_\epsilon  \cdot (|\alpha| + \epsilon)^n,~~ \forall \, n \in \N,
    \end{equation}

\begin{equation}\label{KforG}
    |G_m| \leqslant K_\epsilon  \cdot (|\beta| + \epsilon)^m,~~ \forall \, m \in \N,   
\end{equation}

where the constant $K_\epsilon$ can be given by $$K_\epsilon = \frac{\kappa}{\epsilon^d},~~and~~ \kappa = 2\,\textbf{max}\{|\alpha|^d, |\beta|^d \}\cdot L  d^d  e^{-d}\,.$$
\end{lemma}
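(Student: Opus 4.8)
The plan is to prove the clean exponential bound $|U_n| \leqslant K_\epsilon (|\alpha|+\epsilon)^n$ from the cruder polynomial bound $|U_n| \leqslant L n^d |\alpha|^n$ by absorbing the polynomial factor $n^d$ into the small exponential slack $\big((|\alpha|+\epsilon)/|\alpha|\big)^n = (1+\epsilon/|\alpha|)^n$. Concretely, I would write
\begin{equation*}
|U_n| \leqslant L\, n^d |\alpha|^n = L\, (|\alpha|+\epsilon)^n \cdot \frac{n^d |\alpha|^n}{(|\alpha|+\epsilon)^n} \leqslant L\,(|\alpha|+\epsilon)^n \cdot \frac{n^d}{(1+\epsilon/|\alpha|)^n},
\end{equation*}
so it suffices to bound $n^d/(1+\epsilon/|\alpha|)^n$ by $K_\epsilon/L$ uniformly in $n\in\N$, and symmetrically for $G_m$ with $|\beta|$ in place of $|\alpha|$.

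The key step is the elementary inequality $t^d \leqslant (d/e)^d e^{\lambda t}$ valid for all $t \geqslant 0$ and all $\lambda > 0$, obtained by maximizing $t^d e^{-\lambda t}$ over $t \geqslant 0$: the maximum occurs at $t = d/\lambda$ and equals $(d/(e\lambda))^d$, so $t^d e^{-\lambda t} \leqslant (d/(e\lambda))^d$, i.e. $t^d \leqslant (d/(e\lambda))^d e^{\lambda t}$. I would apply this with $t = n$ and a suitable $\lambda$ chosen so that $e^{\lambda} \leqslant 1+\epsilon/|\alpha|$, i.e. $\lambda = \log(1+\epsilon/|\alpha|)$; then $n^d \leqslant (d/(e\lambda))^d (1+\epsilon/|\alpha|)^n$, which gives $n^d/(1+\epsilon/|\alpha|)^n \leqslant (d/(e\lambda))^d$. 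To get the clean form $K_\epsilon = \kappa/\epsilon^d$ claimed in the statement, I use $\lambda = \log(1+\epsilon/|\alpha|) \geqslant \epsilon/|\alpha| \cdot \frac{1}{2}$ — no wait, more simply one wants $\lambda \geqslant \epsilon/(2|\alpha|)$ type bounds; actually since $\log(1+u)\geqslant u - u^2/2 \geqslant u/2$ for $u \in (0,1]$, with $u = \epsilon/|\alpha|$ (legitimate once $\epsilon_0 \leqslant |\alpha|$, and we may also need $\epsilon_0 \leqslant |\beta|$), we get $1/\lambda \leqslant 2|\alpha|/\epsilon$, hence $(d/(e\lambda))^d \leqslant (2|\alpha|)^d d^d e^{-d}/\epsilon^d$. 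Then $L \cdot (d/(e\lambda))^d \leqslant L (2|\alpha|)^d d^d e^{-d}/\epsilon^d$, and taking the maximum over the $|\alpha|$ and $|\beta|$ versions absorbs the factor $2^d$ into $\kappa = 2\,\textbf{max}\{|\alpha|^d,|\beta|^d\} L d^d e^{-d}$ — I'd double-check the bookkeeping of the constant $2$ versus $2^d$ against the paper's stated $\kappa$, adjusting $\epsilon_0$ if needed so that everything is clean.

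I would set $\epsilon_0 := \min\{1,|\alpha|,|\beta|\}$ (or just $\epsilon_0$ small enough that $\epsilon/|\alpha|,\epsilon/|\beta|\in(0,1]$), note $K_\epsilon = \kappa/\epsilon^d > 1$ for $\epsilon$ small since $\kappa$ is a fixed positive constant, and then assemble: for every $n\in\N$,
\begin{equation*}
|U_n| \leqslant L\,(|\alpha|+\epsilon)^n \cdot \Big(\frac{d}{e\log(1+\epsilon/|\alpha|)}\Big)^d \leqslant \frac{\kappa}{\epsilon^d}(|\alpha|+\epsilon)^n = K_\epsilon (|\alpha|+\epsilon)^n,
\end{equation*}
and identically for $|G_m|$ with $|\beta|$. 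The only genuinely delicate point — and the main "obstacle," though it is minor — is matching the explicit constant: making sure the $\log(1+u)\geqslant u/2$ estimate and the factor coming from $\max\{|\alpha|^d,|\beta|^d\}$ combine to give exactly $\kappa = 2\,\textbf{max}\{|\alpha|^d,|\beta|^d\}\, L\, d^d e^{-d}$ rather than something with an extra $2^{d}$; if the stated $\kappa$ is what the author wants, one simply shrinks $\epsilon_0$ so that $(2|\alpha|/\epsilon)^d \leqslant \text{const}\cdot(|\alpha|/\epsilon)^d$ is not needed and instead bounds $\log(1+\epsilon/|\alpha|)$ more carefully, e.g. using that $\log(1+u)\geqslant u$ fails but $\log(1+u) \geqslant u\log 2$ for $u\in(0,1]$, giving $1/\lambda \leqslant |\alpha|/(\epsilon\log 2)$ and hence a constant $(1/\log 2)^d$ which for the purposes of a "$\Theta$" statement is harmless — so in fact I would not stress the exact value and would simply verify the claimed $K_\epsilon$ works, which it does for $\epsilon_0$ chosen small enough. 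Everything else is routine one-variable calculus.
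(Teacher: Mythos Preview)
Your approach is essentially identical to the paper's: both reduce to bounding $\sup_{n} n^d / (1+\epsilon/|\alpha|)^n$, and your maximization of $t^d e^{-\lambda t}$ with $\lambda = \log(1+\epsilon/|\alpha|)$ is precisely the paper's minimization of $f(x) = x\log(1+\epsilon/|\alpha|) - d\log x$, yielding the same bound $K_\epsilon \geqslant L d^d e^{-d}/(\log(1+\epsilon/|\alpha|))^d$. Your worry about $2$ versus $2^d$ is well-placed but resolved exactly as you suspect: the paper does not use the uniform bound $\log(1+u)\geqslant u/2$ but instead invokes $\log(1+u)/u \to 1$ as $u\to 0$ to choose $\epsilon_0$ (depending on $d$) small enough that $\log(1+\epsilon/|\alpha|) \geqslant 2^{-1/d}\,\epsilon/|\alpha|$, which gives the stated $\kappa = 2\,\max\{|\alpha|^d,|\beta|^d\}\, L d^d e^{-d}$.
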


\begin{proof}

Without loss of generality, let's assume that $|\alpha| \geqslant |\beta|.$ Then (\ref{KforU}) and (\ref{KforG})  will follow from the following inequality

$$ L \cdot n^d \cdot |\alpha|^n \leqslant K_{\epsilon} \cdot (|\alpha| + \epsilon)^n  $$

Take logarithm and we can obtain

\begin{equation}\label{LogLK}
    \log L - \log K_{\epsilon}  \leqslant n \log (1 + \frac{\epsilon}{|\alpha|}) - d \log n 
    \end{equation}

Define $$ f(x): \,= x \log (1 + \frac{\epsilon}{|\alpha|}) - d \log x,~ \forall x \in (0, \infty).$$ Then $f(x)$  attains its minimum at $x_0 = \frac{d}{\log (1+\frac{\epsilon}{|\alpha|})}.$

So in order to make inequality (\ref{LogLK})  hold for all positive integer $n$, it suffices that the following hold.

$$  \log L - \log K_{\epsilon} \leqslant d - d \log ( \frac {d} {\log (1 + \frac{\epsilon}{|\alpha|}) } ) $$

Thus we  have

\begin{align*}
 \log K_{\epsilon}  \geqslant  - d \log \log (1 + \frac{\epsilon}{|\alpha|}) + \log L + d \log d - d     
\end{align*}

Therefore, 
\begin{align*}
 K_{\epsilon}  &\geqslant  \emph{\emph{exp}} (- d \log \log (1 + \frac{\epsilon}{|\alpha|})  ) \, \emph{\emph{exp}} (\log L + d \log d - d )\\
&= \frac{1}{(\log (1 + \frac{\epsilon}{|\alpha|}))^d} \cdot L d^d e^{-d}
\end{align*}

Note that $$  \lim_{x \to \infty} \frac{\log (1+x)}{x}  = 1\,, $$ so exists $\epsilon_0 \in (0, 1)$, when $\epsilon \in (0, \epsilon_0],$ we can take

$$ K_{\epsilon} = \frac{ \kappa }{\epsilon^d}~, $$

where $  \kappa = 2   |\alpha|^d  \cdot L  d^d  e^{-d}.$

\end{proof}

\subsection{A lower bound for $S(x)$}

The conditions  $K_\epsilon  \cdot (|\alpha| + \epsilon)^n \leqslant \frac{x}{2}$  and $K_\epsilon  \cdot (|\beta| + \epsilon)^m \leqslant \frac{x}{2}$ will imply

$$ |U_n - G_m| \leqslant |U_n| + |G_m|  \leqslant  K_\epsilon  \cdot (|\alpha| + \epsilon)^n + K_\epsilon  \cdot (|\beta| + \epsilon)^m \leqslant x.$$
\medskip

Noting that
\begin{align*}
    n \leqslant \frac{\log x}{\log (|\alpha| +\epsilon) } - \frac{\log(2K_\epsilon)}{\log (|\alpha| +\epsilon)} &\Longleftrightarrow K_\epsilon  \cdot (|\alpha| + \epsilon)^n \leqslant \frac{x}{2}\\
    \\
   m \leqslant \frac{\log x}{\log (|\beta| +\epsilon) } - \frac{\log(2K_\epsilon)}{\log (|\beta| +\epsilon)} &\Longleftrightarrow K_\epsilon  \cdot (|\beta| + \epsilon)^m \leqslant \frac{x}{2}
    \end{align*}
\bigskip

we can obtain lower bounds for $S(x)$:

\begin{align*}
    S(x) \geqslant (\frac{\log x}{\log (|\alpha| +\epsilon) } - \frac{\log(2K_\epsilon)}{\log (|\alpha| +\epsilon)})(\frac{\log x}{\log (|\beta| +\epsilon) } - \frac{\log(2K_\epsilon)}{\log (|\beta| +\epsilon)} )
\end{align*}
\medskip


Expand the product and divide $(\log x)^2:$

\begin{align*}
    \frac{S(x)}{  (\log x)^2} \geqslant &\frac{1}{\log( |\alpha| + \epsilon)  \log(|\beta| + \epsilon)}  -  \frac{ 2 \log (2 K_{\epsilon}) }{ \log( |\alpha| + \epsilon)  \log(|\beta| + \epsilon) } \frac{ 1} { \log x}\\\\  &+  \frac{ (\log(2 K_{\epsilon}))^2  }{ \log( |\alpha| + \epsilon)  \log(|\beta| + \epsilon) } \frac{1}{(\log x)^2}  \end{align*}

Set $$ \epsilon = \frac{ 1} { \log x} ~, $$ then 

$$   \log(2 K_{\epsilon}) = \log  \frac{ 2 \kappa }{\epsilon^d}  = \log (  2 \kappa (\log x)^d ) = \log 2 \kappa + d \log \log x =  \Theta ( \log \log x )$$

Thus

\begin{equation*}
 -  \frac{ 2 \log (2 K_{\epsilon}) }{ \log( |\alpha| + \epsilon)  \log(|\beta| + \epsilon) } \frac{ 1} { \log x}  =   \Theta (  \frac{ \log \log x }{   \log x }  )     
\end{equation*}

\begin{align*}
    \frac{ (\log(2 K_{\epsilon}))^2  }{ \log( |\alpha| + \epsilon)  \log(|\beta| + \epsilon) } \frac{1}{(\log x)^2}  = \frac{ (\Theta ( \log \log x ))^2  }{ \log( |\alpha| + \epsilon)  \log(|\beta| + \epsilon) } \frac{1}{(\log x)^2} =  \Theta( (  \frac{ \log \log x }{   \log x }  )^2  )
\end{align*}

And 

\begin{align*}
 \log(1+\frac{\epsilon}{|\alpha|}) = \log (1+ \frac{1}{|\alpha|} \cdot \frac{1}{\log x} ) = \Theta (  \frac{ 1 }{   \log x }  ),~~~ \log(1+\frac{\epsilon}{|\beta|}) =  \Theta (  \frac{ 1 }{   \log x }  )  
\end{align*}

It follows 

\begin{align*}
&~~~~\frac{1}{\log( |\alpha| + \epsilon)  \log(|\beta| + \epsilon)} -  \frac{1}{\log( |\alpha| )  \log(|\beta| )}\\\\ &=   -\frac{\log( |\alpha| + \epsilon)  \log(|\beta| + \epsilon) -   \log |\alpha| \log |\beta| }{\log |\alpha| \log |\beta| \log( |\alpha| + \epsilon)  \log(|\beta| + \epsilon)}\\ \\ 
 & = - \frac{\log( |\alpha| + \epsilon)  \log(|\beta| + \epsilon) -\log( |\alpha|)  \log(|\beta| + \epsilon)+\log( |\alpha|)  \log(|\beta| + \epsilon) - \log |\alpha| \log |\beta|}{\log |\alpha| \log |\beta| \log( |\alpha| + \epsilon)  \log(|\beta| + \epsilon)}
 \\ \\ 
 & = -\frac{\log(|\beta|+\epsilon)\log(1+\frac{\epsilon}{|\alpha|}) + \log(1+\frac{\epsilon}{|\beta|})\log |\alpha| }{\log |\alpha| \log |\beta| \log( |\alpha| + \epsilon)  \log(|\beta| + \epsilon)}\\\\&=\Theta (  \frac{ 1 }{   \log x }  ) 
\end{align*}

Combine these terms, we obtain
\begin{align*}
    \frac{S(x)}{  (\log x)^2} &\geqslant \frac{1}{\log |\alpha| \cdot  \log|\beta| } + \Theta (  \frac{ 1 }{   \log x }  )  + \Theta (  \frac{ \log \log x }{   \log x }  )    + \Theta( (  \frac{ \log \log x }{   \log x }  )^2  )\\\\
    & = \frac{1}{\log |\alpha| \cdot  \log|\beta| } + \Theta (  \frac{ \log \log x }{   \log x }  ) 
\end{align*}

\subsection{An upper bound for $S(x)$}
In this subsection, we give estimates for upper bounds for $m$ and $n$ under the condition that $|U_n - G_m |\leqslant x$. Let $x$ be sufficiently large, so that $m$ and $n$ could also  be sufficiently large.

Let's write $a(n) = a_1(n),~~ b(m) = b_1 (m)$ and let $u_n = \sum_{i=2}^{t}a_i(n)\alpha_i^n $ and $g_m = \sum_{j=2}^{s}b_j(m)\beta_j^m$, then

\begin{align*}
   U_n = a(n) \alpha^n + u_n, ~~~~~ G_m = b(m) \beta^m + g_m
\end{align*}

Substitute into $|U_n - G_m |\leqslant x$, then we get

\begin{equation}\label{LongInequality}
   x \geqslant  |a(n) \alpha^n + u_n - (b(m) \beta^m + g_m)|   \geqslant |a(n) \alpha^n - b(m) \beta^m|- |u_n| -|g_m|
\end{equation}
\bigskip

\textbf{Case 1 $m > n$:}  we have \textbf{max}$\{n, m\} = m$.
\bigskip

We first give an upper bound for $m$, then give an upper bound for $n$.

\medskip

Divide $ |b(m)|\cdot|\beta|^m$ in each sides of (\ref{LongInequality}), then

$$ \frac{x}{|b(m)|\cdot|\beta|^m} \geqslant |\frac{a(n)}{b(m)}\alpha^n\beta^{-m} - 1| - \frac{|u_n|}{|b(m)|\cdot |\beta|^m} - \frac{|g_m|}{|b(m)|\cdot|\beta|^m}$$
\bigskip

Set $\Lambda = \frac{a(n)}{b(m)}\alpha^n\beta^{-m} - 1. $ 
\bigskip

Set $t = 3, ~~\gamma_1 = \frac{a(n)}{b(m)},~~ b_1 =  1,~~\gamma_2 = \alpha,~~  b_2 =  n, ~~ \gamma_3 = \beta,~~  b_3 =  -m,~~    $ 
\bigskip

$B = \textbf{max} \{ |n|, |-m|, 1 \} = m.$  $D = [\mathbb{K}: \mathbb{Q} ]$, where $\mathbb{K} = \mathbb{Q}(\alpha, \alpha_2, \dots,\alpha_t, \beta, \beta_2, \dots, \beta_s)$.
\bigskip

$ A_2 \geqslant \textbf{max} \{ D h(\alpha), |\log \alpha|, 0.16 \}, ~~A_3 \geqslant \textbf{max} \{ D h(\beta), |\log \beta|, 0.16 \}$.  So we can choose $A_2$ and $A_3$ to be two positive constants.
\bigskip

By \textbf{Lemma \ref{Loglm}}, there exists a positive constant $C^{'}$, such that when $m \geqslant 2$, we have
$$\textbf{max} \{ D h(\gamma_1), |\log \gamma_1|, 0.16 \} = \textbf{max} \{ D h( \frac{a(n)}{b(m)}), |\log  \frac{a(n)}{b(m)}|, 0.16 \} \leqslant C^{'} \log \textbf{max}\{n, m  \}  = C^{'} \log m,$$

So we can let $A_1 = C^{'} \log m$.
\bigskip

If $\Lambda = 0$, then by \textbf{Lemma \ref{Fraclm}} and \textbf{Lemma \ref{Loglm}}, $m$ will be small~\cite{Pink}.

$$ \Lambda = 0 \Longleftrightarrow \frac{a(n)}{b(m)} = \frac{\beta^m}{\alpha^n} \implies C \log m \geqslant h(\frac{a(n)}{b(m)}) = h(\frac{\beta^m}{\alpha^n}) \geqslant C_0 m \implies m ~~\emph{\emph{is small }} $$
\medskip

Thus we can fix some integer $m_1$ such that $m \geqslant m_1\implies \Lambda \neq 0 $. 
\bigskip

So when $m \geqslant m_1$,  by Matveev’s theorem, we will have 

$$\log |\Lambda| > - C (1+ \log (3m))\log m,$$

where $C$ is a positive constant.
\bigskip

Equivalently, we have

$$|\Lambda| > \emph{\emph{exp}}(- C (1+ \log (3m))\log m) = \frac{1}{m^C} \cdot \frac{1}{m^{C\log 3m}}~,$$
\bigskip

By the conditions that $|\beta| > |\beta_2| $ and $|\beta| > |\alpha_2|$, we can choose $ \beta_2^{\prime}$ and $ \alpha_2 ^{\prime}$ satisfying that  $ |\beta| > |\beta_2^{\prime}|> |\beta_2| $ and $|\beta| > |\alpha_2^{\prime}|> |\alpha_2|.$

\bigskip

Then we can find a positive constant $ \widehat{C}$ such that
 
 $$ |u_n| \leqslant  \widehat{C} \cdot |\alpha_2^{\prime}|^n, ~~~|g_m|\leqslant \widehat{C} \cdot |\beta_2^{\prime}|^m, ~~~~~\forall m, n$$
\medskip

Moreover, since $|\frac{\alpha^{\prime}_2}{\beta}|<1$ and $ |\frac{\beta_2^{\prime}}{\beta}| < 1$, there exists $m_2$ such that when $m \geqslant m_2$, the following will hold.

$$\frac{1}{4m^C} \cdot \frac{1}{m^{C\log 3m}}> \frac{ \widehat{C}}{\delta} \cdot |\frac{\alpha_2^{\prime}}{\beta}|^m ,~~~~~ \frac{1}{4m^C} \cdot \frac{1}{m^{C\log 3m}} > \frac{ \widehat{C}}{\delta} \cdot |\frac{\beta_2^{\prime}}{\beta}|^m$$
\bigskip

And for some positive constant $\delta$ and some integer $m_3$, we have $|b(m)| \geqslant \delta$, for all $ m \geqslant m_3$.

Therefore, when $m \geqslant m_0$,  we can get estimates for lower bounds of $ \frac{x}{|b(m)|\cdot|\beta|^m}$ as following. Here $m_0 = \textbf{max}\{m_1, m_2, m_3\}$.

\begin{align*}
 \frac{x}{|b(m)|\cdot|\beta|^m} &\geqslant |\frac{a(n)}{b(m)}\alpha^n\beta^{-m} - 1| - \frac{|u_n|}{|b(m)|\cdot |\beta|^m} - \frac{|g_m|}{|b(m)|\cdot|\beta|^m}\\
& \geqslant |\Lambda| - \frac{ \widehat{C}}{\delta} \cdot \frac{|\alpha_2^{\prime}|^n}{|\beta|^m} - \frac{ \widehat{C}}{\delta} \cdot \frac{|\beta_2^{\prime}|^m}{|\beta|^m}\\
& > \frac{1}{m^C} \cdot \frac{1}{m^{C\log 3m}} - \frac{ \widehat{C}}{\delta} \cdot |\frac{\alpha_2^{\prime}}{\beta}|^m - \frac{ \widehat{C}}{\delta} \cdot |\frac{\beta_2^{\prime}}{\beta}|^m\\
& =  \frac{1}{2m^C} \cdot \frac{1}{m^{C\log 3m}} +  (\frac{1}{4m^C} \cdot \frac{1}{m^{C\log 3m}}- \frac{ \widehat{C}}{\delta} \cdot |\frac{\alpha_2^{\prime}}{\beta}|^m) + (\frac{1}{4m^C} \cdot \frac{1}{m^{C\log 3m}} - \frac{ \widehat{C}}{\delta} \cdot |\frac{\beta_2^{\prime}}{\beta}|^m)\\
& > \frac{1}{2m^C} \cdot \frac{1}{m^{C\log 3m}}
\end{align*}

Thus 

$$  \frac{x}{|b(m)|\cdot|\beta|^m} > \frac{1}{2m^C} \cdot \frac{1}{m^{C\log 3m}}, ~~ m \geqslant m_0$$

Take logarithm,

\begin{align*}
 \frac{\log x}{\log |\beta|} > m - \frac{C(\log 3m)( \log m)}{\log |\beta|} - \frac{C\log m}{\log |\beta|}+ \frac{ \log \delta -\log 2}{\log |\beta|}    
\end{align*}
\bigskip

The terms  $(\log 3m)( \log m)$ and $\log m$ are controlled by the term $m$, so
$\forall\, \eta \in (0, 1),$ $\exists \,x_1 = x_1 (\eta)$ depending on $\eta$ such that when $x \geqslant x_1$,

$$m \leqslant (1+\eta) \frac{\log x}{\log |\beta|}.$$
\medskip

Next, we will  obtain a bound for $n$.

By $ |G_m| \leqslant K_\epsilon  \cdot (|\beta| + \epsilon)^m $, we have

$$m \leqslant (1+\eta) \frac{\log x}{\log |\beta|} \implies |G_m| \leqslant K_\epsilon \cdot x^{(1+\eta) \frac{\log(|\beta|+\epsilon)}{\log |\beta|}}$$

So  $|U_n|$ is bounded by

$$|U_n| \leqslant |U_n -G_m|+|G_m| \leqslant x + K_\epsilon \cdot x^{(1+\eta) \frac{\log(|\beta|+\epsilon)}{\log |\beta|}} \leqslant 2K_\epsilon \cdot x^{(1+\eta) \frac{\log(|\beta|+\epsilon)}{\log |\beta|}}$$
\bigskip

We can find a positive constant $J_0$ and integer $n_1$ such that 

$$ |U_n| \geqslant J_0 \cdot |\alpha|^n, ~~~\forall n \geqslant n_1$$

Combine these  inequalities 

$$J_0 \cdot |\alpha|^n \leqslant 2K_\epsilon \cdot x^{(1+\eta) \frac{\log(|\beta|+\epsilon)}{\log |\beta|}},   ~~~\forall n \geqslant n_1$$

Take logarithm,

$$\log J_0 + n \log|\alpha| \leqslant \log(2K_\epsilon) + (1+\eta) \frac{\log(|\beta|+\epsilon)}{\log |\beta|} \log x$$

Thus $$n \leqslant  (1+\eta) \frac{\log(|\beta|+\epsilon)}{\log |\beta|}   \frac{\log x}{\log |\alpha|}+  \frac{\log(2K_\epsilon) - \log J_0}{\log |\alpha|} $$
\bigskip

So for the case \textbf{max}$\{n, m\} = m$,
we can get

\begin{align*}
  m &\leqslant  (1+\eta) \frac{\log x}{\log |\beta|}\\
  \\
  n &  \leqslant  (1+\eta) \frac{\log(|\beta|+\epsilon)}{\log |\beta|}   \frac{\log x}{\log |\alpha|}+  \frac{\log(2K_\epsilon) - \log J_0}{\log |\alpha|} 
\end{align*}

\bigskip

\textbf{Case 2 $m \leqslant n$:}  we have  \textbf{max}$\{n, m\} = n$.
\bigskip

Divide $ |a(n)|\cdot|\alpha|^n$ in each sides of (\ref{LongInequality}), then

$$ \frac{x}{|a(n)|\cdot|\alpha|^n} \geqslant |\frac{b(m)}{a(n)}\beta^m\alpha^{-n} - 1| - \frac{|u_n|}{|a(n)|\cdot |\alpha|^n} - \frac{|g_m|}{|a(n)|\cdot|\alpha|^n}$$
\bigskip

Similarly, we can get

\begin{align*}
  n &\leqslant  (1+\eta) \frac{\log x}{\log |\alpha|}\\
  \\
  m &\leqslant  (1+\eta) \frac{\log(|\alpha|+\epsilon)}{\log |\alpha|}   \frac{\log x}{\log |\beta|}+  \frac{\log(2K_\epsilon) - \log J_0}{\log |\beta|}   
\end{align*}

\bigskip

As a result, in both cases \textbf{max}$\{n, m\} = m$ and  \textbf{max}$\{n, m\} = n$, we all have the following inequalities:

\begin{align*}
 m &\leqslant  (1+\eta) \frac{\log(|\alpha|+\epsilon)}{\log |\alpha|}   \frac{\log x}{\log |\beta|}+  \frac{\log(2K_\epsilon) - \log J_0}{\log |\beta|} \\
  \\
  n &\leqslant  (1+\eta) \frac{\log(|\beta|+\epsilon)}{\log |\beta|}   \frac{\log x}{\log |\alpha|}+  \frac{\log(2K_\epsilon) - \log J_0}{\log |\alpha|}    
\end{align*}

\bigskip

Then one can  obtain upper bounds for $S(x)$:

\begin{align*}
 S(x) \leqslant &( (1+\eta) \frac{\log(|\alpha|+\epsilon)}{\log |\alpha|}   \frac{\log x}{\log |\beta|}+  \frac{\log(2K_\epsilon) - \log J_0}{\log |\beta|})
\\\\ &\times ((1+\eta) \frac{\log(|\beta|+\epsilon)}{\log |\beta|}   \frac{\log x}{\log |\alpha|}+  \frac{\log(2K_\epsilon) - \log J_0}{\log |\alpha|} )
\end{align*}
\bigskip

Expand the product, 

\begin{align*}
S(x) \leqslant & (1 + \eta)^2 \frac{\log (|\alpha| + \epsilon) \log (|\beta| + \epsilon)}{\log |\alpha| \log |\beta|} \frac{(\log x)^2}{\log |\alpha| \log |\beta|}\\\\ &+ (1 + \eta) \frac{\log (|\alpha|+\epsilon)}{\log |\alpha|}\frac{\log x}{\log |\beta|} \frac{\log(2K_{\epsilon}) - \log J_0}{\log |\alpha|}\\\\ &+ (1 + \eta) \frac{\log (|\beta|+\epsilon)}{\log |\beta|} \frac{\log x}{\log |\alpha|} \frac{\log(2K_{\epsilon}) - \log J_0}{\log |\beta|}\\\\ 
&+ \frac{(\log(2K_{\epsilon}) - \log J_0)^2}{\log |\alpha| \log |\beta|} \,.
\end{align*}

Again we set $\epsilon = \frac{1}{\log x} $ and set 

\begin{align*}
\mathbb{I}:~ = (1 + \eta)^2 \frac{\log (|\alpha| + \epsilon) \log (|\beta| + \epsilon)}{\log (|\alpha|) \log(|\beta|) } \frac{1}{\log (|\alpha|) \log(|\beta|) }   
\end{align*}

then

\begin{align*}
 \frac{ \mathbb{I}}{(1 + \eta)^2} & =  \frac{\log (|\alpha| + \epsilon) \log (|\beta| + \epsilon)- \log (|\alpha|) \log(|\beta|)}{(\log |\alpha| \log|\beta| )^2} + \frac{1}{\log |\alpha| \log|\beta| }\\   &= \Theta (\frac{1}{\log x}) + \frac{1}{\log |\alpha| \log|\beta| } 
\end{align*}

Set 

\begin{align*}
\mathbb{II}:~ = (1 + \eta) \frac{\log (|\alpha|+\epsilon)}{\log |\alpha|\log |\beta|} \frac{\log(2K_{\epsilon}) - \log J_0}{\log |\alpha|} \frac{1}{\log x}
\end{align*}

then by $\log(2 K_{\epsilon}) =\Theta ( \log \log x )$, we will have

\begin{align*}
\mathbb{II} = \Theta (\frac{ \log \log x}{\log x} )
\end{align*}

Similarly, one can get

\begin{align*}
\mathbb{II}^{\prime}:~ = (1 + \eta) \frac{\log (|\beta|+\epsilon)}{\log |\beta|\log |\alpha|}  \frac{\log(2K_{\epsilon}) - \log J_0}{\log |\beta|}\frac{1}{\log x} = \Theta (\frac{ \log \log x}{\log x} )
\end{align*}

And 

\begin{align*}
 \mathbb{III}:~ =   \frac{(\log(2K_{\epsilon}) - \log J_0)^2}{\log |\alpha| \log |\beta|} \frac{1}{(\log x)^2} = \Theta( (  \frac{ \log \log x }{   \log x }  )^2  )
\end{align*}

As a result,
\begin{align*}
  \frac{S(x)}{  (\log x)^2} &\leqslant \mathbb{I} +\mathbb{II} + \mathbb{II}^{\prime} + \mathbb{III}\\
  & = (\Theta (\frac{1}{\log x}) + \frac{(1 + \eta)^2}{\log |\alpha| \log|\beta| }) + \Theta (\frac{ \log \log x}{\log x} ) +\Theta (\frac{ \log \log x}{\log x} )+ \Theta( (  \frac{ \log \log x }{   \log x }  )^2  ) \\\\
  & = (1 + \eta)^2 \frac{1}{\log |\alpha| \log|\beta| } + \Theta (\frac{ \log \log x}{\log x} )
\end{align*}

So
\begin{align*}
  \frac{S(x)}{  (\log x)^2} &\leqslant  (1 + \eta)^2 \frac{1}{\log |\alpha| \cdot \log|\beta| } + \Theta (\frac{ \log \log x}{\log x} ), ~~~~\forall x \geqslant x_1(\eta)
\end{align*}

\end{proof}
\bigskip

\section{Further Conjectures and Problems}
The key in our proof is the tool for linear logarithmic forms  for algebraic numbers $\alpha$ and $\beta$. However, we conjecture that there should be similar results even when  $\alpha$ or $\beta$ are transcendental numbers.
\bigskip




\begin{conjecture}
Let $\alpha, \beta \in \mathbb{C},~ |\alpha|>1, ~ |\beta| > 1$.  Then $\alpha$ and $\beta$ are multiplicatively independent  if and only if

$$ \#\{(n, m): ~  | \alpha^n - \beta^m | \leqslant x,~ (n, m) \in \N \times \N \}  \sim \frac{(\log x)^2}{\log |\alpha|\cdot \log  |\beta| },~~~~~ x \to \infty$$
\end{conjecture}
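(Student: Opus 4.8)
The plan is to mimic the proof of Theorem~\ref{THMinequality}, isolating the single place where the algebraicity of $\alpha$ and $\beta$ entered. Write $N(x)=\#\{(n,m)\in\N\times\N:\ |\alpha^n-\beta^m|\le x\}$. For the direction ``the asymptotic $\Rightarrow$ multiplicative independence'' I would argue by contraposition: if $\alpha,\beta$ are multiplicatively dependent then, since $|\alpha|,|\beta|>1$, comparing absolute values in a relation $\alpha^u\beta^v=1$ forces (after changing signs) positive integers $a,b$ with $\alpha^a=\beta^b$, hence $\alpha^{ak}=\beta^{bk}$ for every $k\in\N$, so $N(x)=\infty$ for all $x\ge0$ and the stated asymptotic fails. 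Thus the real content is the direction ``multiplicative independence $\Rightarrow$ asymptotic''. Here the lower bound $N(x)\ge(1+o(1))\frac{(\log x)^2}{\log|\alpha|\log|\beta|}$ is unconditional: every pair with $|\alpha|^n\le x/2$ and $|\beta|^m\le x/2$ contributes, and the number of such pairs is asymptotic to $\frac{(\log x)^2}{\log|\alpha|\log|\beta|}$. For the matching upper bound I would split the pairs counted by $N(x)$ according to whether $\max(|\alpha|^n,|\beta|^m)\le 2x$ or $>2x$: the first class again has cardinality $(1+o(1))\frac{(\log x)^2}{\log|\alpha|\log|\beta|}$, so the whole problem reduces to showing that the near-cancellation class $\{(n,m):\ |\alpha^n-\beta^m|\le x,\ \max(|\alpha|^n,|\beta|^m)>2x\}$ has cardinality $o((\log x)^2)$.

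For a near-cancellation pair with, say, $|\beta|^m\ge|\alpha|^n$ we have $|\alpha^n\beta^{-m}-1|\le x|\beta|^{-m}<1/2$, and taking the principal logarithm gives $|n\log\alpha-m\log\beta-2\pi i k|\le 2x|\beta|^{-m}$ for the unique $k\in\Z$ with $|k|\ll\max(n,m)$ (when $\alpha,\beta>1$ are real one simply has $k=0$). Exactly as in the proof of Theorem~\ref{THMinequality}, if the linear form on the left admitted a lower bound $|n\log\alpha-m\log\beta-2\pi i k|\gg(\max(|n|,|m|,|k|))^{-C}$ for some constant $C$ --- which is what Matveev's theorem delivers when $\alpha,\beta$ are algebraic --- then $|\beta|^m>2x$ would force $m\le(1+o(1))\frac{\log x}{\log|\beta|}$, and then $|\alpha|^n\le|\alpha^n-\beta^m|+|\beta|^m$ would force $n\le(1+o(1))\frac{\log x}{\log|\alpha|}$; summing over the two symmetric sub-cases bounds the near-cancellation class by $O((\log x)^2)$, and after optimizing the thresholds by $o((\log x)^2)$, which finishes the argument.

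The step I expect to be the genuine obstacle is precisely this Diophantine input, and I do not believe it can be supplied in the stated generality --- indeed the conjecture as literally written is false. For a counterexample take $\beta=2$ and $\alpha=2^{\theta}$, where $\theta>0$ is a Liouville number whose continued-fraction partial quotients satisfy $a_{k+1}\ge e^{q_k^{2}}$, with $q_k$ the $k$-th convergent denominator. Then $\alpha^u\beta^v=1$ forces $\theta u+v=0$, so $\alpha$ and $\beta$ are multiplicatively independent; yet for the convergents $p_k/q_k$ of $\theta$ the pairs $(q_k,p_k)\in\N\times\N$ satisfy $|\alpha^{q_k}-\beta^{p_k}|=2^{p_k}|e^{(q_k\theta-p_k)\log 2}-1|\le 4(\log 2)\,2^{\theta q_k}\,|q_k\theta-p_k|$, and since $|q_k\theta-p_k|<e^{-q_k^2}$ this tends to $0$ as $k\to\infty$. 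Hence $N(x)=\infty$ for every sufficiently large $x$, contradicting the asserted asymptotic.

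Consequently the honest output of this plan is a corrected statement rather than a proof: the asymptotic should be conjectured --- and, I expect, can then be proved by the argument sketched above --- for every multiplicatively independent pair $\alpha,\beta$ with $|\alpha|,|\beta|>1$ that additionally satisfies a Diophantine hypothesis ruling out the pathology above, for instance that $|n\log\alpha-m\log\beta-2\pi i k|\gg(\max(|n|,|m|,|k|))^{-C}$ for some $C$ (equivalently, in the real case, that $\log|\alpha|/\log|\beta|$ is not a Liouville number). Such a condition holds for Lebesgue-almost-every pair and, by the Baker--Matveev theorem, for every pair of algebraic numbers, so the corrected conjecture would both refine Theorem~\ref{THMinequality} and explain why algebraicity was the natural hypothesis there.
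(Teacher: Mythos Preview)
This statement appears in the paper as a \emph{conjecture}; the paper offers no proof, so there is nothing to compare your argument against. What you have done instead is more interesting: you have correctly shown that the conjecture, as literally stated, is \emph{false}. Your Liouville construction with $\beta=2$ and $\alpha=2^{\theta}$ is valid. Since $\theta$ is irrational, $\alpha^u\beta^v=2^{\theta u+v}=1$ forces $u=v=0$, so $\alpha,\beta$ are multiplicatively independent; yet the convergents $p_k/q_k$ of $\theta$ give infinitely many pairs $(q_k,p_k)\in\N\times\N$ with $|\alpha^{q_k}-\beta^{p_k}|=2^{p_k}\,|2^{\theta q_k-p_k}-1|\ll 2^{\theta q_k}\,|\theta q_k-p_k|\to 0$, so $N(x)=\infty$ for every $x>0$ and the asserted asymptotic cannot hold. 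Your contrapositive argument for the other implication (multiplicative dependence together with $|\alpha|,|\beta|>1$ forces $\alpha^a=\beta^b$ for some positive integers $a,b$, hence $N(x)=\infty$) is also correct.

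Your diagnosis of \emph{why} the forward implication fails is exactly right and lines up with the structure of the paper's proof of Theorem~\ref{THMinequality}: the lower bound on $N(x)$ is soft and unconditional, while the matching upper bound hinges entirely on Matveev's lower bound for $|\Lambda|=|\gamma_1^{b_1}\cdots\gamma_t^{b_t}-1|$, which is only available for algebraic $\gamma_i$. Your example shows this is not a gap in technique but a genuine obstruction. The amended formulation you propose --- assuming a Diophantine condition of the shape $|n\log\alpha-m\log\beta-2\pi ik|\gg(\max\{|n|,|m|,|k|\})^{-C}$, or in the positive real case simply that $\log\alpha/\log\beta$ is not Liouville --- is the natural repair, and under that hypothesis the sketch you give (splitting off the near-cancellation pairs and bounding them via the Diophantine input, exactly as in the proof of Theorem~\ref{THMinequality}) does go through. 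In short: your proposal is not a failed proof but a correction to the conjecture.
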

\bigskip

We also have the following conjecture which can imply that $\log(\pi)$ is irrational.
\begin{conjecture}
$$ \#\{(n, m): ~  | \pi^n - e^m | \leqslant x, ~(n, m) \in \N \times \N \}  \sim \frac{(\log x)^2}{\log \pi},~~~~~ x \to \infty$$
\end{conjecture}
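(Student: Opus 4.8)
The plan is to treat this as the (formal) $\alpha=\pi$, $\beta=e$ instance of Theorem~\ref{THMinequality}: since $\log e=1$, the conjectured main term $\frac{(\log x)^2}{\log\pi\cdot\log e}$ collapses to $\frac{(\log x)^2}{\log\pi}$, and here the sequences are simply $U_n=\pi^n$ and $G_m=e^m$ with no lower-order terms (formally $a(n)=b(m)=1$ and $u_n=g_m=0$). I would follow the same two-sided scheme as in Theorem~\ref{THMinequality} — an elementary lower bound and an arithmetic upper bound — and isolate precisely the one step at which the transcendence of $\pi$ and $e$ obstructs the argument. The guiding observation is that the entire difficulty is concentrated in a single Diophantine input about $\log\pi$, everything else being soft.

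First, the lower bound is unconditional and needs no arithmetic. Whenever $\pi^n\leqslant x/2$ and $e^m\leqslant x/2$ one has $|\pi^n-e^m|\leqslant\pi^n+e^m\leqslant x$, so $S(x)$ is at least the number of lattice points in the rectangle $\{n:\pi^n\leqslant x/2\}\times\{m:e^m\leqslant x/2\}$, which is $\big(\tfrac{\log x}{\log\pi}+O(1)\big)\big(\tfrac{\log x}{\log e}+O(1)\big)\sim\frac{(\log x)^2}{\log\pi}$. This already matches the conjectured asymptotic from below, exactly as in the lower-bound subsection of Theorem~\ref{THMinequality}, and uses nothing about $\pi$ or $e$ beyond their sizes.

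For the matching upper bound I would split the solutions into those lying in the same rectangle $\{\pi^n\leqslant 2x,\ e^m\leqslant 2x\}$, which already number at most $(1+o(1))\frac{(\log x)^2}{\log\pi}$, and a ``tail'' consisting of pairs with $\max\{\pi^n,e^m\}>2x$. For a tail pair one necessarily has $\pi^n$ within $x$ of $e^m$ while both are large, so dividing through gives $|\pi^n e^{-m}-1|\leqslant x\,e^{-m}$, forcing the linear form $|n\log\pi-m|$ to be extremely small. Exactly as Matveev's theorem was used for algebraic roots, I would need a lower bound of the shape $|\pi^n e^{-m}-1|\geqslant\exp(-o(m))$, equivalently $|n\log\pi-m|\geqslant\exp(-o(m))$; combining it with $|\pi^n-e^m|\leqslant x$ then confines $m$, and symmetrically $n$, to a window of width $o(\log x)$ around $\log x$, so that the tail contributes only $o((\log x)^2)$ and the upper bound closes at $(1+\eta)^2\frac{(\log x)^2}{\log\pi}$ for every $\eta>0$, just as in Theorem~\ref{THMinequality}.

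The hard part — indeed the reason this is stated as a conjecture rather than a theorem — is the existence of that lower bound for $|n\log\pi-m|$. Matveev's theorem applies only to logarithms of \emph{algebraic} numbers, whereas $\pi$ and $e$ are transcendental, and no unconditional Baker-type estimate for linear forms in $\log\pi$ and $\log e=1$ is available. What the argument above actually requires is merely that $\log\pi$ have finite irrationality measure, i.e.\ be non-Liouville: any bound $|n\log\pi-m|\geqslant c\,n^{-\mu}$ valid for large $n$ suffices, since then $\log(1/|n\log\pi-m|)=O(\log m)=o(m)$. Thus the realistic deliverable is a \emph{conditional} theorem — assuming $\log\pi$ is non-Liouville, the scheme above proves the conjecture. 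Removing this hypothesis is out of reach: the conjecture in fact implies that $\log\pi$ is irrational, for were $\log\pi=p/q$ one would have $\pi^{qk}=e^{pk}$ for every $k$, whence $|\pi^n-e^m|=0\leqslant x$ for infinitely many pairs and $S(x)=\infty$, contradicting finiteness. Since even the irrationality of $\log\pi$ is open, establishing an effective irrationality measure for $\log\pi$ is the genuine obstacle, and would demand transcendence methods going well beyond the linear-forms-in-logarithms toolkit used in this paper.
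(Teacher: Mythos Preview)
The paper offers no proof of this statement: it is explicitly labelled a conjecture, and the only comment the paper makes is that it would imply the irrationality of $\log\pi$. Your write-up correctly recognises this and does not pretend to give an unconditional proof; instead you isolate the precise Diophantine hypothesis under which the scheme of Theorem~\ref{THMinequality} goes through. That analysis is sound: the lower bound is indeed unconditional, and for the upper bound the only missing ingredient is a subexponential lower bound for $|n\log\pi-m|$, which follows from any finite irrationality measure for $\log\pi$. Your observation that rationality of $\log\pi$ would force $S(x)=\infty$ recovers exactly the implication the paper highlights.

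In short, there is nothing in the paper to compare against beyond that one-line remark; your proposal goes further than the paper does by formulating a clean conditional theorem (non-Liouville $\log\pi$ $\Rightarrow$ the asymptotic) and by explaining why Matveev's theorem cannot be invoked here. That is the appropriate response to a conjecture, and the conditional argument you sketch is correct.
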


We also put some problems for more than two powers. These questions may also need linear logarithmic forms for transcendental numbers, which is beyond our current methods. 
\bigskip

\textbf{Problem A:} Is the following true?

$$ \#\{(n, m): ~  | \pi^n + (\sqrt 5)^n - 7^m - e^m| \leqslant x, ~(n, m) \in \N \times \N \}  \sim \frac{(\log x)^2}{(\log \pi) (\log 7)},~~~~~ x \to \infty$$

\bigskip

\textbf{Problem B:} Could one find positive constants $\alpha, \beta, \xi, M$, all larger than 1, such that   any two of $\alpha, \beta, \xi$ are  multiplicatively independent and  the following can be true?

$$ \#\{(n, m, k): ~  | \alpha^n + \beta^m - \xi^k| \leqslant M, ~(n, m, k) \in \N \times \N \times \N\} =  \infty$$
\bigskip


\textbf{Acknowledgements.}\;The author acknowledges the support of the Austrian Science Fund (FWF): W1230. The author also thanks useful suggestions from professor Robert Tichy for finding the remainder term.

\noindent
\textsc{Institute of Analysis and Number Theory, Graz University of Technology, Kopernikusgasse 24/II,
8010 Graz, Austria}
\medskip

\noindent
\emph{E-mail address:} yang@tugraz.at

\end{document}